\documentclass{article}
\usepackage{amsmath,amssymb,amsthm}
\usepackage{tikz}
\usepackage{color}
\usepackage[toc]{appendix}
\usepackage{graphicx}
\usepackage{fancyhdr}
\usepackage{enumitem}
\usepackage{bbm}
\usepackage{parskip}
\usepackage{float}
\usepackage{chngpage}
\usepackage{calc}
\usepackage{bigints}
\usepackage{array}
\usepackage{booktabs}
\usepackage{rotating}
\usepackage{multirow}
\usepackage{adjustbox}
\usepackage{tabularx}
\usepackage{verbatim}
\usepackage{mathtools}
\usepackage{ragged2e}
\usepackage[makeroom]{cancel}
\usepackage{caption}
\usepackage{hyperref}
\usepackage{caption}
\usepackage{subcaption}
\usepackage{appendix}
\usepackage{pgfplots}
\pgfplotsset{compat=1.16}
\usepackage[english]{babel}
\usepackage{hyphenat}
\usepackage[makeindex]{imakeidx}
\usepackage{amsmath}

\newcommand{\Ai}{\operatorname{Ai}}
\newcommand{\Bi}{\operatorname{Bi}}

\newcommand{\vertiii}[1]{{\left\vert\kern-0.25ex\left\vert\kern-0.25ex\left\vert #1 
		\right\vert\kern-0.25ex\right\vert\kern-0.25ex\right\vert}}

\usetikzlibrary{datavisualization}
\usetikzlibrary{matrix}
\usetikzlibrary{datavisualization.formats.functions}

\newtheorem{theorem}{Theorem}[section]

\newtheorem{corollary}[theorem]{Corollary}
\newtheorem{lemma}[theorem]{Lemma}

\newtheorem{remark}[theorem]{Remark}
\newtheorem*{notation*}{Notation}

\newcount\madechanges
\def\caution#1{\ifnum \madechanges=1 \affixmessage{#1}%
	\else \relax \fi}
\def\affixmessage#1{\marginpar{{\footnotesize  \em #1} \openup
		-.3\baselineskip }}
\makeatletter
\def\section{\@startsection {section}{1}{\z@}{3.25ex plus 1ex minus
		.2ex}{1.5ex plus .2ex}{\large\bf}}
\def\subsection{\@startsection{subsection}{2}{\z@}{3.25ex plus 1ex minus
		.2ex}{1.5ex plus .2ex}{\normalsize\bf}}
\@addtoreset{equation}{section} 
\makeatother
\title{The Tricomi equation in the hyperbolic half plane under additive space-time Gaussian White Noise perturbation}

\author{Enrico Bernardi \thanks{Dipartimento di Scienze Statistiche Paolo Fortunati, Università di Bologna, Bologna, Italy.\\ \textbf{e-mail}: enrico.bernardi@unibo.it} \and Alberto Lanconelli\thanks{Dipartimento di Scienze Statistiche Paolo Fortunati, Università di Bologna, Bologna, Italy.\\ \textbf{e-mail}: alberto.lanconelli2@unibo.it}}
\date{\today}
\begin{document}
	\maketitle
	\bigskip
	\begin{abstract}
We study the Cauchy problem for the Tricomi equation perturbed by space-time Gaussian White Noise. To prove existence and uniqueness of the solution, we employ a Fourier transform approach that allows to obtain its representation in terms of certain integrals of the Airy functions. Then, via a careful analysis of the asymptotic behaviour of those integrals, we obtain all the desired properties of the solution, such as square integrability, continuity of its sample paths and stationarity with respect to the space variable. In relation to that stationarity, we also provide the precise description of how the correlation function behaves for small values of the space-lag. We also remark that, in contrast to the findings of the recent paper \cite{EnricoAlberto2}, the properties of the solution to our stochastic Tricomi equation are equivalent to those derived in studying the corresponding problem for the wave operator.  
	\end{abstract}
	
	Key words and phrases: Tricomi equation, space-time Gaussian White Noise, Airy functions, correlation function. \\
	
	AMS 2020 classification: 60H15, 60H05, 35R60

\allowdisplaybreaks

\section{Introduction and statement of the main results} 

The aim of this paper is to study the following Cauchy problem: 
\begin{align}\label{1bis}
\begin{cases}
\partial_{tt} U(t,x)=t\partial_{xx}U(t,x)+W(t,x),& t>0,x\in\mathbb{R};\\
U(0,x)=0,\quad \partial_{t}U(0,x)=0,& x\in\mathbb{R},
\end{cases}
\end{align}
where $\{W(t,x)\}_{t\geq 0,x\in\mathbb{R}}$ is a space-time Gaussian White Noise. The first equation in \eqref{1bis} represents a random perturbation of the classic Tricomi equation and we aim at investigating existence, uniqueness as well as probabilistic and analytic properties of its solution. \\
The Tricomi equation \cite{tricomi} is a second-order partial
differential equation of mixed elliptic-hyperbolic type with the form:
\begin{align}\label{tricomi intro}
\partial_{tt}u(t,x)=t\partial_{xx}u(t,x),\quad t,x\in\mathbb{R}.
\end{align}
The equation is hyperbolic in the half plane $t > 0$, elliptic in the half plane $t < 0$, and degenerates on the line $t = 0$. Many important problems in fluid mechanics and differential geometry can be reduced to corresponding problems for the Tricomi equation, particularly \emph{transonic flow} problems \cite{manwell} and \emph{isometric embedding} problems \cite{Qing} (see also \cite{tricomiintro} and the references quoted there). 

Most of the literature on hyperbolic partial differential equations perturbed by noise is centred on the wave equation. In the seminal works \cite{Walsh} and \cite{Orsingher} the authors consider the Cauchy problem for the one dimensional wave equation perturbed by additive space-time Gaussian White Noise proving existence, uniqueness and probabilistic properties of the solution. For higher space dimension, \cite{Dalang99} identifies a necessary and sufficient condition  relating additive noise terms to space dimension that allow for well posedness of the corresponding stochastic partial differential equations; see also \cite{DalangFrangos}. The so-called Hyperbolic Anderson model, i.e. wave equation perturbed by a multiplicative noise term with linear diffusion coefficient, is considered in several recent papers: see for instance \cite{BalanSong}, \cite{BalanJolis} and the reference quoted there. For non linear problems some relevant pointers are \cite{ConusDalang},\cite{Peszat} and \cite{PeszatZabczyk}. 

Stochastic partial differential equations with non strictly hyperbolic operators are not very much explored in the literature.  Some recent papers have attempted to investigate how the degeneracies of the operator have an impact on stochastic perturbations. In \cite{Ascanelli2} the authors study mild solutions of a class of stochastic partial differential equations, involving operators with polynomially bounded coefficients under suitable hyperbolicity hypotheses; \cite{Ascanelli3} and \cite{Ascanelli1} investigate the existence of random-field solutions for weakly hyperbolic stochastic partial differential equations; in \cite{EnricoAlberto} and \cite{EnricoLeonardo} the authors study the effect of Gaussian perturbations on a hyperbolic partial differential equation with double characteristics and double symplectic characteristics, respectively, in
low spatial dimensions. Furthermore, \cite{EnricoAlberto2} considers a class of Tricomi-type partial differential equations and analyze the robustness of the solution by taking the initial data to be Gaussian White Noise: the authors  discover that the existence of a well-defined random field solution is lost upon the introduction of lower-order terms in the operator.

In the present paper we focus on the Cauchy problem \eqref{1bis} and investigate existence and uniqueness of square integrable solutions. We recall that in the series of papers \cite{Gelfand1,Gelfand2,Gelfand2bis,Gelfand3} the authors have found an explicit expression for the fundamental solution of the Tricomi operator. However, due to the complexity of such representation, our approach avoids the use of that result and instead employs a Fourier method. Specifically, we consider the Cauchy problem	
\begin{align}\label{tricomi det}
\begin{cases}
\partial_{tt} u(t,x)=t\partial_{xx}u(t,x)+h(t,x),& t>0,x\in\mathbb{R};\\
u(0,x)=0,\quad \partial_{t}u(0,x)=0,& x\in\mathbb{R},
\end{cases}
\end{align}
for some smooth function $h:[0,+\infty[\times\mathbb{R}\to\mathbb{R}$. Taking the Fourier transform in the spatial variable and setting
\begin{align*}
\widehat{u}(t,\xi) = \int_{\mathbb{R}} e^{-i\xi x} u(t,x)dx\quad\mbox{ and }\quad\widehat{h}(t,\xi) = \int_{\mathbb{R}} e^{-i\xi x} h(t,x)dx,
\end{align*}
problem \eqref{tricomi det} transforms into 
\begin{align}\label{tricomi det 2}
\begin{cases}
\partial_{tt} \widehat{u}(t,\xi)=-t\xi^2\widehat{u}(t,\xi)+\widehat{h}(t,\xi),& t>0,\xi\in\mathbb{R};\\
\widehat{u}(0,\xi)=0,\quad \partial_{t}\widehat{u}(0,\xi)=0,& \xi\in\mathbb{R}.
\end{cases}
\end{align}
Now, the homogeneous equation
\begin{align*}
\partial_{tt} v(t,\xi)=-t\xi^2v(t,\xi)
\end{align*}
can be reduced by the change of variable $\eta := |\xi|^{2/3} t$ to
\begin{align*}
\partial_{\eta\eta} v(\eta,\xi)=-\eta v(\eta,\xi).
\end{align*}
The linearly independent solutions of this equation are the Airy functions, denoted $\mathrm{Ai}$ and $\mathrm{Bi}$, whose Wronskian is given by
\begin{align*}
\mathrm{Ai}(0)\mathrm{Bi}'(0) - \mathrm{Ai}'(0)\mathrm{Bi}(0) = \frac{1}{\pi}.
\end{align*}
Therefore, if we set
\begin{align*}
v_1(t,\xi): = \mathrm{Ai}\!\left(-|\xi|^{2/3} t\right)\quad\mbox{ and }\quad
v_2(t,\xi): = \mathrm{Bi}\!\left(-|\xi|^{2/3} t\right),
\end{align*}
whose Wronskian with respect to $t$ is
\begin{align*}
v_1(0,\xi) \partial_tv_{2}(0,\xi) - \partial_tv_1(0,\xi)v_{2}(0,\xi) = -\frac{|\xi|^{2/3}}{\pi},
\end{align*}
we obtain by variation of parameters the solution to \eqref{tricomi det 2} as
\begin{align}\label{eq:u-hat}
\widehat{u}(t,\xi)=  \int_{0}^{t} C(t,s,|\xi|)\,\widehat{h}(s,\xi)\,ds,
\end{align}
where, for \(\rho>0\),
\begin{align}\label{eq:C}
C(t,s,\rho):= -\frac{\pi}{\rho^{2/3}}\!
\left[ \mathrm{Bi}(-\rho^{2/3}t)\,\mathrm{Ai}(-\rho^{2/3}s)
- \mathrm{Ai}(-\rho^{2/3}t)\,\mathrm{Bi}(-\rho^{2/3}s)\right], 
\end{align}
and we set
\begin{align*}
 C(t,s,0):=t-s,
\end{align*}
which is the continuous extension of \eqref{eq:C} to \(\rho=0\).
Thus, inverting the Fourier transform we can represent the solution to \eqref{tricomi det} as
\begin{align}\label{eq:solution}
u(t,x) = \frac{1}{2\pi}\int_{\mathbb{R}} e^{i\xi x}
\int_{0}^{t} C(t,s,|\xi|)\widehat{h}(s,\xi)dsd\xi.
\end{align}
Formula \eqref{eq:solution} will be the starting point for our study of \eqref{1bis}.

To formulate our main result, we now assume that space-time Gaussian White Noise $\{W(t,x)\}_{t\geq 0,x\in\mathbb{R}}$ from \eqref{1bis} is represented through the formal series 
\begin{align}\label{serie}
W(t,x)=\sum_{m,n\geq 1}\psi_m(t)\varphi_n(x)\mathtt{z}_{mn},\quad t\geq 0,x\in\mathbb{R}.
\end{align} 
Here:
\begin{itemize}
\item $\{\psi_m\}_{m\geq 1}$ is a fixed real, smooth orthonormal basis
  of $L^2([0,+\infty))$, for instance, one may take a Laguerre basis;
\item $\{\varphi_n\}_{n\geq 1}$ is the orthonormal basis of $L^2(\mathbb{R})$ made of Hermite functions;
\item $\{\mathtt{z}_{mn}\}_{m,n\geq 1}$ denotes a family of independent standard Gaussian random variables defined on a common probability space $(\Omega,\mathcal{F},\mathbb{P})$.
\end{itemize}
A solution $\{U(t,x)\}_{t\geq 0,x\in\mathbb{R}}$ to problem \eqref{1bis} is by definition 
\begin{align*}
U(t,x):=\lim_{N\to+\infty}U_N(t,x)\quad\mbox{ in }\mathbb{L}^2(\Omega,\mathcal{F},\mathbb{P})
\end{align*}
for all $t\geq 0$ and $x\in\mathbb{R}$, where $\{U_N(t,x)\}_{t\geq 0,x\in\mathbb{R}}$ is solution to
\begin{align}\label{1bis_N}
\begin{cases}
\partial_{tt}U_N(t,x)=t\partial_{xx}U_N(t,x)+W_N(t,x),& t>0,x\in\mathbb{R};\\
U_N(0,x)=0,\quad \partial_{t}U_N(0,x)=0,& x\in\mathbb{R},
\end{cases}
\end{align}  
while
\begin{align*}
W_N(t,x)=\sum_{m,n= 1}^N\psi_m(t)\varphi_n(x)\mathtt{z}_{mn},\quad t\geq 0,x\in\mathbb{R}, 
\end{align*} 
provides a finite-rank smooth approximation of the space-time Gaussian White Noise $\{W(t,x)\}_{t\geq 0,x\in\mathbb{R}}$. Notice that problem \eqref{1bis_N} can be treated $\omega$-wise as a non-homogeneous deterministic Tricomi equation of the form \eqref{tricomi det}. \\
We are now ready to state the main result of our paper.

\begin{theorem}\label{main theorem}
Problem \eqref{1bis} admits a unique solution, in the above mild random-field sense, which is a Gaussian random field with representation
\begin{align}\label{solution series}
U(t,x)=\sum_{m,n\geq 1}u_{mn}(t,x)\mathtt{z}_{mn},\quad t\geq 0,x\in\mathbb{R},
\end{align}
where
\begin{align*}
u_{mn}(t,x):=\frac{(-i)^n}{2\pi}\int_{\mathbb{R}}\int_{0}^te^{ix\xi}C(t,s,|\xi|)\psi_m(s)\varphi_n(\xi)dsd\xi.
\end{align*}
Here, for \(\rho>0\),
\begin{align}\label{def C}
C(t,s,\rho):= -\frac{\pi}{\rho^{2/3}}\!
\left[ \mathrm{Bi}(-\rho^{2/3}t)\,\mathrm{Ai}(-\rho^{2/3}s)
- \mathrm{Ai}(-\rho^{2/3}t)\,\mathrm{Bi}(-\rho^{2/3}s)\right],\quad s,t\geq 0, 
\end{align}
and \(C(t,s,0):=t-s\). The functions $\mathrm{Ai}$ and $\mathrm{Bi}$ are the Airy functions. The series in \eqref{solution series} converges in $\mathbb{L}^2(\Omega,\mathcal{F},\mathbb{P})$ and almost surely. Furthermore,
\begin{align}\label{norma}
\mathbb{E}[|U(t,x)|^2]&=\frac{1}{2\pi^2}\int_0^{+\infty}\int_0^{t}\left|C(t,s,\rho)\right|^2ds d\rho<+\infty,\quad t\geq 0,x\in\mathbb{R}.
\end{align}
\end{theorem}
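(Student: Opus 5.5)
Because \eqref{1bis_N} is driven by a finite sum, I would first apply formula \eqref{eq:solution} $\omega$-wise with $h=W_N$. The Hermite functions are eigenfunctions of the Fourier transform, $\widehat{\varphi_n}=\sqrt{2\pi}(-i)^{n-1}\varphi_n$ up to the indexing convention (indices starting at $1$). Using this, the explicit representation $U_N(t,x)=\sum_{m,n=1}^N u_{mn}(t,x)\mathtt{z}_{mn}$ follows directly, with $u_{mn}$ as in the statement after the constants are absorbed. Uniqueness is immediate from the definition: $U$ is defined as an $\mathbb{L}^2$ limit of the uniquely determined $U_N$, so uniqueness reduces to that of the deterministic problem \eqref{tricomi det}, which holds via the Fourier/ODE argument. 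Each $U_N(t,x)$ is a centred Gaussian variable, so any $\mathbb{L}^2$ limit is Gaussian and the family is jointly Gaussian. The remaining task is to show that $\sum_{m,n}|u_{mn}(t,x)|^2<\infty$ and to compute this sum.

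For the key computation, fix $(t,x)$ and write
\[
u_{mn}(t,x)=\frac{(-i)^n}{2\pi}\big\langle F_{t,x},\,\psi_m\otimes\varphi_n\big\rangle_{L^2([0,\infty)\times\mathbb{R})},
\qquad
F_{t,x}(s,\xi)=e^{ix\xi}C(t,s,|\xi|)\mathbbm{1}_{[0,t]}(s).
\]
The family $\{\psi_m\otimes\varphi_n\}$ is an orthonormal basis of $L^2([0,\infty)\times\mathbb{R})$, so Parseval gives
\[
\sum_{m,n}|u_{mn}|^2=\frac{1}{4\pi^2}\int_{\mathbb{R}}\int_0^t|C(t,s,|\xi|)|^2\,ds\,d\xi=\frac{1}{2\pi^2}\int_0^\infty\int_0^t|C(t,s,\rho)|^2\,ds\,d\rho,
\]
where the last step uses evenness in $\xi$. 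For complex $u_{mn}$ I would note that $U$ is real: pairing the conjugate terms (using symmetry $\xi\to-\xi$ and $\varphi_n(-\xi)=(-1)^{n-1}\varphi_n(\xi)$) shows the $u_{mn}$ are real, so $\mathbb{E}|U_N|^2=\sum|u_{mn}|^2$. The same identity gives the Cauchy property of $U_N$ in $\mathbb{L}^2$. Almost sure convergence follows because the series has independent mean-zero summands with summable variances (Kolmogorov's two-series theorem or Lévy's equivalence theorem), for any enumeration of the index pairs. Independence from $x$ is automatic because $|e^{ix\xi}|=1$.

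\textbf{Main obstacle.} The main obstacle is proving
\[
\int_0^\infty\int_0^t|C(t,s,\rho)|^2\,ds\,d\rho<\infty.
\]
For $\rho$ in a bounded set, $C$ is continuous with $C(t,s,0)=t-s$, so this range is harmless. For large $\rho$ I would use the oscillatory asymptotics
\[
\mathrm{Ai}(-z)\sim\pi^{-1/2}z^{-1/4}\sin\!\big(\tfrac23 z^{3/2}+\tfrac\pi4\big),\qquad
\mathrm{Bi}(-z)\sim\pi^{-1/2}z^{-1/4}\cos\!\big(\tfrac23 z^{3/2}+\tfrac\pi4\big),
\]
together with the global bound $|\mathrm{Ai}(-z)|,|\mathrm{Bi}(-z)|\le c(1+z)^{-1/4}$ for $z\ge0$. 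This yields
\[
|C(t,s,\rho)|\lesssim \rho^{-2/3}\big(1+\rho^{2/3}t\big)^{-1/4}\big(1+\rho^{2/3}s\big)^{-1/4}.
\]
Squaring and integrating in $s$ gives the following: for the region $s\le\rho^{-2/3}$ the contribution is $\lesssim \rho^{-2}\,t^{-1/2}$; for $s\ge\rho^{-2/3}$ it is $\rho^{-4/3}\rho^{-2/3}t^{-1/2}\int_0^t s^{-1/2}\,ds\lesssim\rho^{-2}$. Both are integrable at $\rho=\infty$, so the whole integral is finite. The delicate points are making the uniform Airy bounds explicit and handling $s$ near $0$, where the argument $\rho^{2/3}s$ is small, which is done by the split at $s=\rho^{-2/3}$ above.
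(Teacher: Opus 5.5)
Your proof follows the paper's: solve \eqref{1bis_N} $\omega$-wise through \eqref{eq:solution}, and compute $\sum_{m,n}|u_{mn}(t,x)|^2$ by Parseval in the product basis $\{\psi_m\otimes\varphi_n\}$. The paper does this in two steps, first in $n$ and then in $m$. You then show the resulting double integral is finite and get almost sure convergence from Kolmogorov's theorem. The one real difference is the finiteness step. The paper only uses that the Airy bracket $\mathrm{Bi}(-\rho^{2/3}t)\mathrm{Ai}(-\rho^{2/3}s)-\mathrm{Ai}(-\rho^{2/3}t)\mathrm{Bi}(-\rho^{2/3}s)$ is bounded, so $\int_0^t|C|^2\,ds\le c\,t\,\rho^{-4/3}$, which is already integrable at infinity; it adds the expansion at $\rho=0$. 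Your $(1+z)^{-1/4}$ bound with the split at $s=\rho^{-2/3}$ is correct and gives the sharper rate $O(\rho^{-2})$ of Lemma \ref{as}, but this theorem does not need it.

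The step you should not wave through is ``after the constants are absorbed''. You correctly wrote $\widehat{\varphi_n}=\sqrt{2\pi}(-i)^{n-1}\varphi_n$. With it, the coefficient of $\mathtt{z}_{mn}$ in $U_N$ is $i\sqrt{2\pi}\,u_{mn}$, not $u_{mn}$. Your own Parseval computation then gives
\[
\mathbb{E}|U(t,x)|^2=\frac{1}{\pi}\int_0^{+\infty}\int_0^t|C(t,s,\rho)|^2\,ds\,d\rho ,
\]
which is $2\pi$ times the right-hand side of \eqref{norma}. The paper gets $\frac{1}{2\pi^2}$ only because it uses $\widehat{\varphi}_n=(-i)^n\varphi_n$. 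That is the eigenrelation for the unitary Fourier transform, not for the convention $\int e^{-i\xi x}$ behind \eqref{eq:solution}.

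As a check, run the same computation with the wave kernel $\sin((t-s)\rho)/\rho$. It must give the classical variance $t^2/4$, a quarter of the area of the backward light cone. The constant $\frac1\pi$ gives this and $\frac{1}{2\pi^2}$ does not.

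So what your argument actually proves is the theorem with the normalization corrected: the limit of $U_N$ is $\sum_{m,n} i\sqrt{2\pi}\,u_{mn}\mathtt{z}_{mn}$, with variance $\frac1\pi\int_0^\infty\int_0^t|C|^2\,ds\,d\rho$. The factor cancels in $r_t$, so the correlation results of Corollary \ref{corollary} are unaffected.

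Similarly, take $\varphi_n$ to be the Hermite function of degree $n-1$, as a basis indexed from $n=1$ requires. Then the phase $(-i)^n$ makes the statement's $u_{mn}$ purely imaginary. Your reality argument applies to the corrected coefficients $i\sqrt{2\pi}\,u_{mn}$, not to $u_{mn}$. Reality is not needed for $\mathbb{E}|U_N|^2=\sum|u_{mn}|^2$ anyway, since the $\mathtt{z}_{mn}$ are real.
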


The proof of Theorem \ref{main theorem} is postponed to Section \ref{proof main theorem} below. The next result describes probabilistic and analytic properties of the stochastic process $x\mapsto U(t,x)$ for fixed $t>0$; in particular, we show that it is second order stationary with correlation function admitting a specific asymptotic behaviour that guarantees continuity of its paths. Such property is shared with the stochastic wave equation, as proved in \cite{Orsingher}.

\begin{corollary}\label{corollary}
For any $t>0$ the stochastic process $\{U(t,x)\}_{x\in\mathbb{R}}$ is second order stationary with expectation
\begin{align*}
\mathbb{E}[U(t,x)]=0\quad\mbox{ for all }x\in\mathbb{R}
\end{align*}
and covariance
\begin{align}\label{cov}
\mathtt{cov}(U(t,x),U(t,y))=\frac{1}{2\pi^2}\int_0^{+\infty}\cos(\rho |x-y|)\int_0^t|C(t,s,\rho)|^2dsd\rho\quad\mbox{ for all }x,y\in\mathbb{R}.
\end{align}
Moreover, the correlation function
\begin{align*}
r_t(h):=\frac{\mathtt{cov}(U(t,x+h),U(t,x))}{\mathbb{V}[U(t,x)]},\quad t>0,x,h\in\mathbb{R},
\end{align*}
verifies
\begin{align*}
r_t(h)=1-\alpha_t|h|+o(|h|)\mbox{ as $|h|$ tends to zero},
\end{align*}
with
\begin{align*}
\alpha_t:=\frac{\pi}{2\int_0^{+\infty}\int_0^{t}\left|C(t,s,\rho)\right|^2ds d\rho}.
\end{align*}
In particular, the stochastic process $\{U(t,x)\}_{x\in\mathbb{R}}$ admits a continuous modification.
\end{corollary}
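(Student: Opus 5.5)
Since $U(t,x)=\sum_{m,n}u_{mn}(t,x)\mathtt{z}_{mn}$ is an $\mathbb{L}^2$-convergent series in independent centred Gaussians (Theorem \ref{main theorem}), $\mathbb{E}[U(t,x)]=0$ is immediate. The covariance equals $\sum_{m,n}u_{mn}(t,x)\overline{u_{mn}(t,y)}$; it is real because $U$ is real. To evaluate it I will use Parseval twice, once in $m$ (the basis $\psi_m$ of $L^2([0,\infty))$) and once in $n$. The key observation is that $(-i)^n\varphi_n$ is the Fourier transform of $\varphi_n$ up to $\sqrt{2\pi}$. So $\{\frac{(-i)^n}{\sqrt{2\pi}}\varphi_n\}_n$ is again an orthonormal basis of $L^2(\mathbb{R})$, and $u_{mn}(t,x)$ is, up to the factor $1/\sqrt{2\pi}$, the $L^2(\mathbb{R}_\xi\times[0,\infty)_s)$ inner product of $g_{t,x}(s,\xi):=e^{ix\xi}C(t,s,|\xi|)\mathbbm{1}_{[0,t]}(s)$ against the product basis. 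Parseval then gives
\begin{align*}
\mathtt{cov}(U(t,x),U(t,y))=\frac{1}{4\pi^2}\int_{\mathbb{R}}\int_0^t e^{i(x-y)\xi}|C(t,s,|\xi|)|^2dsd\xi .
\end{align*}
The integrand is even in $\xi$ apart from the exponential, so this becomes \eqref{cov}; its finiteness is \eqref{norma}. Because the covariance depends only on $x-y$, second order stationarity follows.

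For the correlation, write $F_t(\rho):=\int_0^t|C(t,s,\rho)|^2ds$, so that
\[
1-r_t(h)=\frac{\int_0^\infty(1-\cos(\rho h))F_t(\rho)d\rho}{\int_0^\infty F_t(\rho)d\rho}.
\]
The main step, and the expected obstacle, is the large-$\rho$ asymptotics $F_t(\rho)=\frac{c_t}{\rho^2}+O(\rho^{-2-\delta})$ for some $\delta>0$, with $c_t$ to be identified. A Tauberian-type computation then gives $\int_0^\infty(1-\cos\rho h)F_t\,d\rho=c_t\frac{\pi}{2}|h|+o(|h|)$. To see this, split the integral at $\rho=R$. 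On $[0,R]$ the integral is $O(h^2R^3)$ for small $h$, while on $[R,\infty)$ we use $\int_0^\infty(1-\cos u)u^{-2}du=\pi/2$, and the error term contributes $O(|h|^{1+\delta})$. Comparing with the claimed $\alpha_t$ forces $c_t=1$ after the $\frac{1}{2\pi^2}$ normalisations cancel in the ratio, so we must show $F_t(\rho)\sim\rho^{-2}$.

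For the asymptotics of $F_t$, substitute $a=\rho^{2/3}t$ and $b=\rho^{2/3}s$, which gives $F_t(\rho)=\pi^2\rho^{-2}\int_0^{a}[\mathrm{Bi}(-a)\mathrm{Ai}(-b)-\mathrm{Ai}(-a)\mathrm{Bi}(-b)]^2db$. Next insert the classical oscillatory asymptotics $\mathrm{Ai}(-z)\approx\pi^{-1/2}z^{-1/4}\sin(\zeta+\pi/4)$ and $\mathrm{Bi}(-z)\approx\pi^{-1/2}z^{-1/4}\cos(\zeta+\pi/4)$, where $\zeta=\frac23 z^{3/2}$. These give $\mathrm{Bi}(-a)\mathrm{Ai}(-b)-\mathrm{Ai}(-a)\mathrm{Bi}(-b)\approx\pi^{-1}(ab)^{-1/4}\sin(\zeta_b-\zeta_a)$. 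Averaging $\sin^2$ to $1/2$ then yields
\[
\int_0^a\approx\frac{1}{2\pi^2}a^{-1/2}\int_0^a b^{-1/2}db=\frac{1}{\pi^2},
\]
hence $F_t(\rho)\sim\rho^{-2}$, as required. To make this rigorous I will bound separately three contributions: the region $b\le1$, using boundedness of the Airy functions near $0$ together with the $a^{-1/4}$ decay; the $O(z^{-3/2})$ remainders in the Airy asymptotics; and the oscillatory term $\cos(2(\zeta_b-\zeta_a))b^{-1/2}$. For the last one, integrate by parts in $\zeta_b$, using $d\zeta_b=b^{1/2}db$. This gives relative errors that are powers of $a^{-1}$, and hence $\delta>0$.

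Finally, $1-r_t(h)=\alpha_t|h|+o(|h|)$ gives $\mathbb{E}|U(t,x+h)-U(t,x)|^2=2\mathbb{V}[U(t,x)](1-r_t(h))\le K|h|$. Since the process is Gaussian, $\mathbb{E}|U(t,x+h)-U(t,x)|^{2p}\le K_p|h|^p$, and Kolmogorov's continuity criterion with $p\ge2$ yields a continuous modification.
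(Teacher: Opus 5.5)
Your proposal is correct and follows the paper's overall plan: Parseval in $n$ and $m$ for the covariance, the key asymptotics $\rho^2K_t(\rho)\to1$, and the identity $\int_0^\infty(1-\cos\rho h)\rho^{-2}\,d\rho=\frac{\pi}{2}|h|$ to extract the linear term of $r_t$. The differences are in how the key asymptotics is proved and how continuity is concluded.

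The paper uses only the qualitative Airy remainders $r_A,r_B\to0$. It cuts at $s=L\rho^{-2/3}$ so that the remainders are uniformly at most $\varepsilon_L$ on the bulk, disposes of the $\cos$ term by Riemann--Lebesgue, and lets $\rho\to\infty$ before $L\to\infty$. The outcome is only $K_t(\rho)=\rho^{-2}+o(\rho^{-2})$, which is why its final step needs the $\varepsilon$--$\rho_0$ splitting.

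You instead use the quantitative remainders $O(z^{-3/2})$, a fixed cut at $b=1$, and an integration by parts in $\zeta_b$. The boundary term vanishes at $b=a$, and $b^{-1}$ has total variation at most $1$ on $[1,a]$. This does give $\int_0^aD^2\,db=\pi^{-2}+O(a^{-1/2})$, i.e.\ $K_t(\rho)=\rho^{-2}\bigl(1+O(\rho^{-1/3})\bigr)$. Your Tauberian step then yields the sharper $r_t(h)=1-\alpha_t|h|+O(|h|^{4/3})$. So you use slightly more Airy information and gain an explicit rate, while the paper needs less and gets only $o(|h|)$.

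For continuity, the paper appeals to the Orsingher/Pickands criterion for stationary Gaussian processes. Your route through $\mathbb{E}|U(t,x+h)-U(t,x)|^2\le K|h|$, Gaussian moments and Kolmogorov is self-contained and gives H\"older exponents below $1/2$ in $x$. It also needs only the upper bound $K_t(\rho)\le C\rho^{-2}$.

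One bookkeeping slip: $\{\frac{(-i)^n}{\sqrt{2\pi}}\varphi_n\}$ is not orthonormal. With the paper's convention $\widehat{\varphi}_n=(-i)^n\varphi_n$, the correct statement is that $\{\psi_m\otimes i^n\varphi_n\}$ is orthonormal and $u_{mn}(t,x)=\frac{1}{2\pi}\langle g_{t,x},\,\psi_m\otimes i^n\varphi_n\rangle$. This yields exactly the factor $\frac{1}{4\pi^2}$ you wrote.
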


The proof of Corollary \ref{corollary} can be found in Section \ref{proof corollary} below. Our last result concerns the continuity property of the stochastic process $t\mapsto U(t,x)$ for fixed $x\in\mathbb{R}$. Here, in absence of stationarity, we must resort to the Kolmogorov continuity theorem which, in combination with the Gaussian nature of our solution process, amounts at estimating the second moment only. In fact, for any $p\geq 1$
 there exists a positive constant $C_p$ such that
 \begin{align*}
 \mathbb{E}[|U(t,x)-U(s,x)|^p]=C_p \mathbb{E}[|U(t,x)-U(s,x)|^2]^{\frac{p}{2}},\quad s,t>0,x\in\mathbb{R},
 \end{align*}
and hence an estimate of the type
\begin{align*}
\mathbb{E}[|U(t,x)-U(s,x)|^2]\leq C|t-s|
\end{align*}
will be sufficient for the purpose. For the proof of Corollary \ref{corollary2} see Section \ref{proof corollary2} below.

\begin{corollary}\label{corollary2}
	For any $x\in\mathbb{R}$ the stochastic process $\{U(t,x)\}_{t\geq 0}$ admits a continuous modification with H$\ddot{o}$lder regularity of order less than $1/2$.
\end{corollary}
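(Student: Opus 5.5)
We will prove Corollary \ref{corollary2} through the Kolmogorov continuity criterion, as anticipated above. Everything reduces to the bound $\mathbb{E}[|U(t,x)-U(s,x)|^2]\leq C_T|t-s|$ for $0\leq s<t\leq T$. Once this is available, Gaussianity gives $\mathbb{E}[|U(t,x)-U(s,x)|^p]\leq C_{p,T}|t-s|^{p/2}$ for every $p$. Kolmogorov's theorem then yields a continuous modification that is Hölder of every order $<\frac12-\frac1p$, and letting $p\to\infty$ gives every order $<1/2$.

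\textbf{Isometry formula.} By Theorem \ref{main theorem}, the variable $U(t,x)-U(s,x)=\sum_{m,n}(u_{mn}(t,x)-u_{mn}(s,x))\mathtt{z}_{mn}$ has second moment $\sum_{m,n}|u_{mn}(t,x)-u_{mn}(s,x)|^2$. We rewrite this sum with the same Parseval argument that produced \eqref{norma}. Extend $C(t,\cdot,\rho)$ by zero outside $[0,t]$. Parseval in $m$ (basis $\psi_m$) and in $n$ (Hermite functions, which are eigenfunctions of the Fourier transform) then gives
\begin{align*}
\mathbb{E}[|U(t,x)-U(s,x)|^2]=\frac{1}{2\pi^2}\int_0^{+\infty}\int_0^{+\infty}\bigl|C(t,r,\rho)\mathbf 1_{r<t}-C(s,r,\rho)\mathbf 1_{r<s}\bigr|^2drd\rho .
\end{align*}
Splitting the $r$-integral at $s$, this is at most a constant times $I_1+I_2$, where
\begin{align*}
I_1:=\int_0^{\infty}\int_s^t|C(t,r,\rho)|^2drd\rho,\qquad I_2:=\int_0^\infty\int_0^s|C(t,r,\rho)-C(s,r,\rho)|^2drd\rho .
\end{align*}

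\textbf{Pointwise bounds on $C$.} We need two estimates for $C$, uniform for $r\leq s\leq t\leq T$.

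First, a sup bound. We expect $|C(t,r,\rho)|\leq c\min\bigl(t-r,\ \rho^{-1}(tr)^{-1/4}\bigr)$ or a similar decaying bound. This follows from the Airy asymptotics $\mathrm{Ai}(-z),\mathrm{Bi}(-z)=O(z^{-1/4})$. Alternatively, one can view $C$ as the Green function of $\partial_{tt}+\rho^2t$: it solves this ODE in $t$ with $C(r,r,\rho)=0$ and $\partial_tC(r,r,\rho)=1$, and an energy estimate gives $|\partial_tC|^2+\rho^2t|C|^2\leq(\text{const})(1+\rho^2 r\cdot 0)$. More precisely, $E=|\partial_tC|^2+\rho^2t|C|^2$ satisfies $E'=\rho^2|C|^2\leq E/t$. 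Hence $E(t)\leq E(r)\,t/r$ with $E(r)=1$, so $|\partial_t C(t,r,\rho)|^2\leq t/r$ and $\rho^2|C|^2\leq 1/r$.

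The singularity as $r\to0$ is problematic. A better route is to compare with the case of a frozen coefficient. Alternatively, we can apply the energy method on the rescaled variable $\eta=\rho^{2/3}t$ and use $\mathrm{Ai},\mathrm{Bi}$ asymptotics directly. This gives $|C|\lesssim \rho^{-2/3}\min(\rho^{2/3}(t-r),\ (1+\rho^{2/3}t)^{-1/4}(1+\rho^{2/3}r)^{-1/4})$ and $|\partial_tC|\lesssim (1+\rho^{2/3}t)^{1/4}(1+\rho^{2/3}r)^{-1/4}$.

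\textbf{Estimating $I_1$ and $I_2$.} For $I_1$ we use $|C(t,r,\rho)|^2\lesssim\min((t-r)^2,\rho^{-2}(tr)^{-1/2})$. For $\rho\leq\rho_0$ the $\rho$-integrand is $O(t-s)$, and for large $\rho$ it is $\lesssim\rho^{-2}\int_s^t (tr)^{-1/2}dr$. The $r$-integral is $\lesssim |t-s|/t$ when $s\geq t/2$. When $s<t/2$, we simply use $I_1\leq \mathbb{E}|U(t)|^2\lesssim 1\lesssim |t-s|/t$, valid for $t$ away from zero; near zero, scaling handles it. For $I_2$ we write $C(t,r,\rho)-C(s,r,\rho)=\int_s^t\partial_\tau C(\tau,r,\rho)d\tau$ and interpolate with the trivial bound $2\sup|C|$. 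This gives $|C(t,r,\rho)-C(s,r,\rho)|^2\lesssim\min\bigl(|t-s|^2\sup|\partial_\tau C|^2,\ \sup|C|^2\bigr)$. We split the $\rho$-integral at $\rho_*\sim|t-s|^{-1}$. The derivative bound gives a contribution $\lesssim|t-s|^2\rho_*$ from the low range, and the decay $\rho^{-2}$ gives $\lesssim\rho_*^{-1}$ from the high range, so both are $\lesssim|t-s|$.

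\textbf{Main obstacle.} The delicate step is obtaining uniform bounds on $C$ and $\partial_tC$ across the turning region $\rho^{2/3}r\lesssim1$, where the Airy functions are not yet oscillatory. The $(1+\rho^{2/3}r)^{-1/4}$ factors must be controlled so that the $r$-integrals near $0$ converge; here the weights $r^{-1/2}$ and $r^{-1/4}$ are integrable. We will treat this by splitting into the regions $\rho^{2/3}r\leq1$ and $\rho^{2/3}r>1$, using Taylor bounds on $\mathrm{Ai},\mathrm{Bi}$ near $0$ and their oscillatory asymptotics otherwise.
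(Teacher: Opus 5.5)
Your proposal is correct and follows essentially the same route as the paper: Kolmogorov plus the Gaussian moment equivalence, the isometry split into the difference term over $[0,s]$ and the new-interval term over $[s,t]$ (your $I_2$ and $I_1$ are the paper's $I_1$ and $I_2$), the global Airy bounds $|\Ai(-x)|+|\Bi(-x)|\lesssim(1+x)^{-1/4}$ and $|\Ai'(-x)|+|\Bi'(-x)|\lesssim(1+x)^{1/4}$, and an interpolation between the mean-value bound and the sup bound through a frequency split. The one step you leave vague, small $t$ in the new-interval term, can be closed either with the paper's $t$-dependent cutoff $\rho_t=t^{-3/2}$ or with the exact scaling $C(\lambda t,\lambda r,\rho)=\lambda C(t,r,\lambda^{3/2}\rho)$, which gives $I(\lambda t,\lambda s)=\lambda^{3/2}I(t,s)$; also, $\int_s^t(tr)^{-1/2}\,dr\le 2(t-s)/t$ holds without the restriction $s\ge t/2$.
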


The paper is organized as follows: Section \ref{proof main theorem} presents the proof of Theorem \ref{main theorem} where we exploit the explicit representation of the White Noise given in \eqref{serie} and some basic facts on Airy functions; in Section \ref{proof corollary} we prove Corollary \ref{corollary}: the proof is articulated in several fine estimates involving the Airy functions, for negative values of their arguments, and related integrals: this will be the key for obtaining the desired properties of the solution; lastly, Section \ref{proof corollary2} contains the proof of Corollary \ref{corollary2} which amounts, by virtue of Kolmogorov continuity theorem and the Gaussianity of the solution, at proving an upper bound for the second moment of the time increment of the solution. 

\section{Proof of Theorem \ref{main theorem}}\label{proof main theorem}

Referring to the representation formula \eqref{eq:solution}, we take
\begin{align}\label{data}
h(t,x):=W_N(t,x)=\sum_{m,n= 1}^N\psi_m(t)\varphi_n(x)\mathtt{z}_{mn},\quad t\geq 0,x\in\mathbb{R}, 
\end{align}
which gives
\begin{align*}
\widehat{h}(s,\xi)&=\sum_{m,n= 1}^N\psi_m(s)\widehat{\varphi}_n(\xi)\mathtt{z}_{mn}\\
&=\sum_{m,n= 1}^N(-i)^n\psi_m(s)\varphi_n(\xi)\mathtt{z}_{mn},\quad s\geq 0,\xi\in\mathbb{R}, 
\end{align*}
where we utilized the well known property
\begin{align*}
\widehat{\varphi}_n(\xi)=(-i)^n\varphi_n(\xi),\quad \xi\in\mathbb{R},n\geq 1.
\end{align*} 
Recalling that \eqref{1bis_N} corresponds to \eqref{tricomi det} with data \eqref{data} we conclude by virtue of \eqref{eq:solution} that
\begin{align}\label{U_N}
U_N(t,x)=\sum_{m,n= 1}^N\frac{(-i)^n}{2\pi}\int_{\mathbb{R}} e^{i\xi x}
\int_{0}^{t} C(t,s,|\xi|)\psi_m(s)\varphi_n(\xi)dsd\xi\quad\!\!\!\!\mathtt{z}_{mn},\quad t\geq 0,x\in\mathbb{R}.
\end{align}
To prove that \eqref{U_N} admits a limit in $\mathbb{L}^2(\Omega,\mathcal{F},\mathbb{P})$ as $N$ tends to infinity, we have to verify the convergence of the series
\begin{align*}
\sum_{m,n\geq 1}\left|\frac{(-i)^n}{2\pi}\int_{\mathbb{R}} e^{i\xi x}
\int_{0}^{t} C(t,s,|\xi|)\psi_m(s)\varphi_n(\xi)dsd\xi\right|^2.
\end{align*}
If this is the case, then
\begin{align*}
U(t,x):=\sum_{m,n\geq 1}\frac{(-i)^n}{2\pi}\int_{\mathbb{R}} e^{i\xi x}
\int_{0}^{t} C(t,s,|\xi|)\psi_m(s)\varphi_n(\xi)dsd\xi\quad\!\!\!\!\mathtt{z}_{mn},\quad t\geq 0,x\in\mathbb{R}
\end{align*}
will be solution to \eqref{1bis}. Now, identity
\begin{align*}
\sum_{n\geq 1}\left|\int_{\mathbb{R}}f(\xi)\varphi_n(\xi)d\xi\right|^2=\int_{\mathbb{R}}|f(\xi)|^2d\xi,\quad f\in L^2(\mathbb{R})
\end{align*}
yields
\begin{align*}
&\sum_{m,n\geq 1}\left|\frac{(-i)^n}{2\pi}\int_{\mathbb{R}}\left[e^{i\xi x}\left(\int_0^tC(t,s,|\xi|)\psi_m(s)ds\right)\right]\varphi_n(\xi)d\xi\right|^2\\
&\quad=\sum_{m\geq 1}\sum_{n\geq 1}\left|\frac{(-i)^n}{2\pi}\int_{\mathbb{R}}\left[e^{i\xi x}\left(\int_0^tC(t,s,|\xi|)\psi_m(s)ds\right)\right]\varphi_n(\xi)d\xi\right|^2\\
&\quad=\sum_{m\geq 1}\frac{1}{4\pi^2}\int_{\mathbb{R}}\left|e^{i\xi x}\left(\int_0^tC(t,s,|\xi|)\psi_m(s)ds\right)\right|^2d\xi\\
&\quad=\sum_{m\geq 1}\frac{1}{4\pi^2}\int_{\mathbb{R}}\left|\int_0^tC(t,s,|\xi|)\psi_m(s)ds\right|^2d\xi\\
&\quad=\sum_{m\geq 1}\frac{1}{4\pi^2}\int_{\mathbb{R}}\left|\int_0^{+\infty}C(t,s,|\xi|)1_{[0,t]}(s)\psi_m(s)ds\right|^2d\xi\\
&\quad=\frac{1}{4\pi^2}\int_{\mathbb{R}}\sum_{m\geq 1}\left|\int_0^{+\infty}C(t,s,|\xi|)1_{[0,t]}(s)\psi_m(s)ds\right|^2d\xi.
\end{align*}
Moreover, thanks to 
\begin{align*}
\sum_{m\geq 1}\left|\int_0^{+\infty}g(t)\psi_m(t)dt\right|^2=\int_0^{+\infty}|g(t)|^2d\xi,\quad g\in L^2([0,+\infty[),
\end{align*}
we can write
\begin{align*}
&\sum_{m,n\geq 1}\left|\frac{(-i)^n}{2\pi}\int_{\mathbb{R}}\left[e^{i\xi x}\left(\int_0^tC(t,s,|\xi|)\psi_m(s)ds\right)\right]\varphi_n(\xi)d\xi\right|^2\\
&=\frac{1}{4\pi^2}\int_{\mathbb{R}}\sum_{m\geq 1}\left|\int_0^{+\infty}C(t,s,|\xi|)1_{[0,t]}(s)\psi_m(s)ds\right|^2d\xi\\
&=\frac{1}{4\pi^2}\int_{\mathbb{R}}\int_0^{+\infty}\left|C(t,s,|\xi|)1_{[0,t]}(s)\right|^2dsd\xi\\
&=\frac{1}{4\pi^2}\int_{\mathbb{R}}\int_0^{t}\left|C(t,s,|\xi|)\right|^2ds d\xi.
\end{align*}
This proves the identity in \eqref{norma}. To prove the finiteness of the last expression
we substitute the identity
\begin{align*}
C(t,s,\rho)=-\frac{\pi}{\rho^{2/3}}\left[\mathrm{Bi}(-\rho^{2/3}t)\mathrm{Ai}(-\rho^{2/3}s)-\mathrm{Ai}(-\rho^{2/3}t)\mathrm{Bi}(-\rho^{2/3}s)\right]
\end{align*}
in \eqref{norma} to get
\begin{align}\label{z}
&\frac{1}{2\pi^2}\int_0^{+\infty}\int_0^{t}\left|C(t,s,\rho)\right|^2ds d\rho\nonumber\\
&\quad=\frac{1}{2\pi^2}\int_0^{+\infty}\int_0^{t}\left|\frac{\pi}{\rho^{2/3}}\left[\mathrm{Bi}(-\rho^{2/3}t)\mathrm{Ai}(-\rho^{2/3}s)-\mathrm{Ai}(-\rho^{2/3}t)\mathrm{Bi}(-\rho^{2/3}s)\right]\right|^2ds d\rho\nonumber\\
&\quad=\frac{1}{2}\int_0^{t}\int_0^{+\infty}\frac{1}{\rho^{4/3}}\left|\mathrm{Bi}(-\rho^{2/3}t)\mathrm{Ai}(-\rho^{2/3}s)-\mathrm{Ai}(-\rho^{2/3}t)\mathrm{Bi}(-\rho^{2/3}s)\right|^2 d\rho ds.
\end{align}
The function
\begin{align*}
	(s,\rho)\mapsto \mathrm{Bi}(-\rho^{2/3}t)\mathrm{Ai}(-\rho^{2/3}s)-\mathrm{Ai}(-\rho^{2/3}t)\mathrm{Bi}(-\rho^{2/3}s)
\end{align*}
is continuous and bounded on $[0,t]\times [0,+\infty[$ for all $t\geq 0$ (see Figure \ref{Airyfigure} for the plots of $\mathrm{Ai}$ and $\mathrm{Bi}$ for negative values of their arguments). Therefore, the inner integral is convergent for large values of $\rho$ due to the presence of the factor $\frac{1}{\rho^{4/3}}$. On the other hand, when $\rho$ is close to zero, we have 
\begin{align*}
	\mathrm{Bi}(-\rho^{2/3}t)\mathrm{Ai}(-\rho^{2/3}s)-\mathrm{Ai}(-\rho^{2/3}t)\mathrm{Bi}(-\rho^{2/3}s)\thicksim \frac{2(s-t)}{\sqrt{3}\Gamma(1/3)\Gamma(2/3)}\rho^{2/3}+O(\rho^{4/3})
\end{align*}
and hence
\begin{align*}
\frac{1}{\rho^{4/3}}\left|\mathrm{Bi}(-\rho^{2/3}t)\mathrm{Ai}(-\rho^{2/3}s)-\mathrm{Ai}(-\rho^{2/3}t)\mathrm{Bi}(-\rho^{2/3}s)\right|^2\thicksim \left(\frac{2(s-t)}{\sqrt{3}\Gamma(1/3)\Gamma(2/3)}\right)^2+O(\rho^{2/3})
\end{align*}
thus entailing the finiteness on the inner integral in \eqref{z}.
The almost sure convergence of the series \eqref{solution series}
follows from its
$\mathbb{L}^2(\Omega,\mathcal{F},\mathbb{P})$-convergence and
classical results on random series, e.g. Kolmogorov’s convergence theorem for independent Gaussian series.

\begin{figure}
	\centering
	\begin{subfigure}[c]{0.40\textwidth}
		\IfFileExists{Ai.jpeg}{\includegraphics[width=\linewidth]{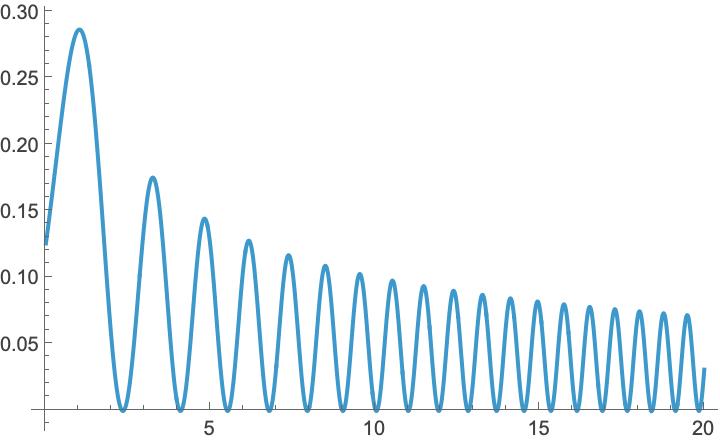}}{\fbox{\parbox[c][3cm][c]{\linewidth}{\centering Plot of $\mathrm{Ai}(-x)$}}}
	\end{subfigure}
	\begin{subfigure}[c]{0.40\textwidth}
		\IfFileExists{Bi.jpeg}{\includegraphics[width=\linewidth]{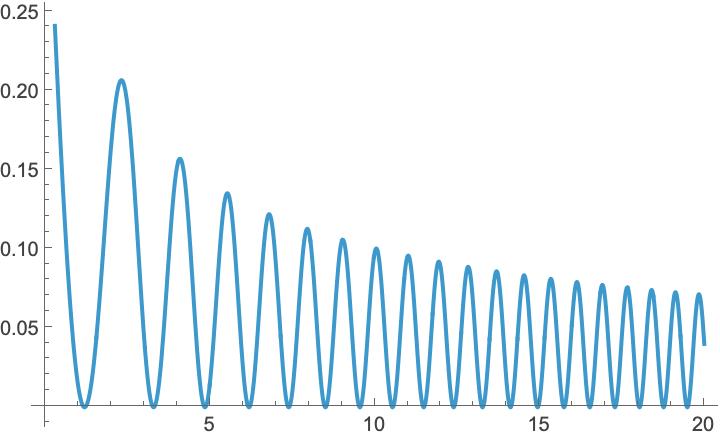}}{\fbox{\parbox[c][3cm][c]{\linewidth}{\centering Plot of $\mathrm{Bi}(-x)$}}}
	\end{subfigure}
	\caption{Plots of $x\mapsto\mathrm{Ai}(-x)$, left figure, and $x\mapsto\mathrm{Bi}(-x)$, right figure, for $x\in [0.+\infty[$.}\label{Airyfigure}
\end{figure}

\section{Proof of Corollary \ref{corollary}}\label{proof corollary}

Fix $t>0$; then,
\begin{align*}
\mathbb{E}[U(t,x)U(t,y)]=&\mathbb{E}[U(t,x)\overline{U(t,y)}]\\
=&\mathbb{E}\left[\sum_{m,n\geq 1}u_{mn}(t,x)\mathtt{z}_{mn}\overline{\sum_{m,n\geq 1}u_{mn}(t,y)\mathtt{z}_{mn}}\right]\\
=&\sum_{m,n\geq 1}u_{mn}(t,x)\overline{u_{mn}(t,y)}\\
=&\sum_{m,n\geq 1}\frac{1}{4\pi^2}\int_{\mathbb{R}}\left[e^{i\xi x}(-i)^n\left(\int_0^tC(t,s,|\xi|)\psi_m(s)ds\right)\right]\varphi_n(\xi)d\xi\\
&\quad\quad\quad\times\int_{\mathbb{R}}\left[e^{-i\xi y}i^n\left(\int_0^tC(t,s,|\xi|)\psi_m(s)ds\right)\right]\varphi_n(\xi)d\xi\\
=&\sum_{m,n\geq 1}\frac{1}{4\pi^2}\int_{\mathbb{R}}\left[e^{i\xi x}\left(\int_0^tC(t,s,|\xi|)\psi_m(s)ds\right)\right]\varphi_n(\xi)d\xi\\
&\quad\quad\quad\times\int_{\mathbb{R}}\left[e^{-i\xi y}\left(\int_0^tC(t,s,|\xi|)\psi_m(s)ds\right)\right]\varphi_n(\xi)d\xi\\
=&\sum_{m\geq 1}\sum_{n\geq 1}\frac{1}{4\pi^2}\int_{\mathbb{R}}\left[e^{i\xi x}\left(\int_0^tC(t,s,|\xi|)\psi_m(s)ds\right)\right]\varphi_n(\xi)d\xi\\
&\quad\quad\quad\times\int_{\mathbb{R}}\left[e^{-i\xi y}\left(\int_0^tC(t,s,|\xi|)\psi_m(s)ds\right)\right]\varphi_n(\xi)d\xi\\
=&\sum_{m\geq 1}\frac{1}{4\pi^2}\int_{\mathbb{R}}e^{i\xi (x-y)}\left(\int_0^tC(t,s,|\xi|)\psi_m(s)ds\right)^2d\xi\\
=&\frac{1}{4\pi^2}\int_{\mathbb{R}}e^{i\xi (x-y)}\int_0^t|C(t,s,|\xi|)|^2dsd\xi\\
=&\frac{1}{4\pi^2}\int_{\mathbb{R}}\cos(\xi (x-y))\int_0^t|C(t,s,|\xi|)|^2dsd\xi\\
=&\frac{1}{2\pi^2}\int_0^{+\infty}\cos(\rho |x-y|)\int_0^t|C(t,s,\rho)|^2dsd\rho.
\end{align*}
This proves formula \eqref{cov}. 

Now, according to \cite{Orsingher} (see also \cite{Pickands}) to prove the continuity of the stationary process $\{U(t,x)\}_{x\in\mathbb{R}}$ it is sufficient to show that its correlation function
\begin{align*}
r_t(h)=\frac{\mathbb{E}[U(t,x+h)U(t,x)]}{\mathbb{V}[U(t,x)]},\quad h\in\mathbb{R}
\end{align*} 
verifies 
\begin{align}\label{r}
r_t(h)=1-\alpha_t|h|^{\beta_t}+o(|h|)\quad\mbox{ as $h$ tends to zero},
\end{align}
for some $\beta_t\in(0,2]$ and $\alpha_t\in\mathbb{R}$. Notice that by virtue of formula \eqref{cov} we can write
\begin{align*}
r_t(h)&=\frac{\frac{1}{2\pi^2}\int_0^{+\infty}\cos(\rho |h|)\int_0^t|C(t,s,\rho)|^2dsd\rho}{\frac{1}{2\pi^2}\int_0^{+\infty}\int_0^t|C(t,s,\rho)|^2dsd\rho}\\
&=1-\frac{\int_0^{+\infty}(1-\cos(\rho |h|))\int_0^t|C(t,s,\rho)|^2dsd\rho}{\int_0^{+\infty}\int_0^t|C(t,s,\rho)|^2dsd\rho}\\
&=1-\hat{\alpha_t}\cdot\int_0^{+\infty}(1-\cos(\rho |h|))\int_0^t|C(t,s,\rho)|^2dsd\rho,
\end{align*}
with
\begin{align*}
\hat{\alpha_t}:=\frac{1}{\int_0^{+\infty}\int_0^t|C(t,s,\rho)|^2dsd\rho}.
\end{align*}
We thus have to investigate the expression
\begin{align*}
\int_0^{+\infty}(1-\cos(\rho |h|))\int_0^t|C(t,s,\rho)|^2dsd\rho.
\end{align*}
To establish our claim we need the following crucial auxiliary fact.

\begin{lemma}\label{as}
Fix \(t>0\), and set
\[
  K_t(\rho):=\int_0^t |C(t,s,\rho)|^2\,ds ,
\]
where
\[
C(t,s,\rho)
=
-\pi\rho^{-2/3}
\Big[
 \Bi(-\rho^{2/3}t)\Ai(-\rho^{2/3}s)
 -
 \Ai(-\rho^{2/3}t)\Bi(-\rho^{2/3}s)
\Big].
\]
Then
\[
  K_t(\rho)=\rho^{-2}+o(\rho^{-2})
  \qquad \text{as } \rho\to+\infty.
\]
Equivalently,
\[
  \rho^2K_t(\rho)\longrightarrow 1 .
\]
\end{lemma}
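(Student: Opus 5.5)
We rescale first. Set $\lambda:=\rho^{2/3}$, $X:=\lambda t$, and substitute $\sigma=\lambda s$ in the definition of $K_t(\rho)$. This gives
\[
K_t(\rho)=\pi^2\lambda^{-3}\int_0^{X}\big|\Bi(-X)\Ai(-\sigma)-\Ai(-X)\Bi(-\sigma)\big|^2\,d\sigma ,
\]
and $\lambda^{-3}=\rho^{-2}$. The lemma is therefore equivalent to
\[
J(X):=\int_0^{X}\big|\Bi(-X)\Ai(-\sigma)-\Ai(-X)\Bi(-\sigma)\big|^2\,d\sigma=\pi^{-2}+o(1),\qquad X\to+\infty .
\]

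The key tool is the classical asymptotic expansion of the Airy functions for large negative argument. With $\zeta(x):=\tfrac23x^{3/2}$,
\[
\Ai(-x)=\pi^{-1/2}x^{-1/4}\big[\sin(\zeta(x)+\tfrac{\pi}{4})+O(x^{-3/2})\big],\qquad
\Bi(-x)=\pi^{-1/2}x^{-1/4}\big[\cos(\zeta(x)+\tfrac{\pi}{4})+O(x^{-3/2})\big].
\]
Inserting these for both $X$ and $\sigma$, the leading part of the bracket is a sine subtraction formula:
\[
\Bi(-X)\Ai(-\sigma)-\Ai(-X)\Bi(-\sigma)=\pi^{-1}(X\sigma)^{-1/4}\big[\sin(\zeta(\sigma)-\zeta(X))+E(X,\sigma)\big],
\]
with $E=O(\sigma^{-3/2})$ for $\sigma\ge1$ (the $X^{-3/2}$ term is smaller still). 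Next split $J(X)$ at $\sigma=1$.

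On $[0,1]$, $\Ai(-\sigma)$ and $\Bi(-\sigma)$ are bounded, while $\Ai(-X)$ and $\Bi(-X)$ are $O(X^{-1/4})$. Hence this piece is $O(X^{-1/2})=o(1)$.

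On $[1,X]$ we obtain
\[
\pi^{-2}X^{-1/2}\int_1^X\sigma^{-1/2}\Big[\tfrac12-\tfrac12\cos\big(2(\zeta(\sigma)-\zeta(X))\big)+O(\sigma^{-3/2})\Big]d\sigma .
\]
We treat the three terms in turn.
\begin{itemize}
\item The constant term gives $\pi^{-2}X^{-1/2}(X^{1/2}-1)\to\pi^{-2}$.
\item The error term gives $X^{-1/2}\int_1^\infty\sigma^{-2}d\sigma=O(X^{-1/2})$. The cross terms between the sine and $E$ are bounded the same way, via $|\sin|\le1$.
\item The oscillatory term must be shown to be $o(X^{1/2})$ before the factor $X^{-1/2}$ is applied.
\end{itemize}

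The oscillatory term is the main obstacle, since the naive bound is only $O(X^{1/2})$. The fix is to change variables to the phase. Since $d\zeta=\sigma^{1/2}d\sigma$, we have
\[
\int_1^X\sigma^{-1/2}\cos\big(2(\zeta(\sigma)-\zeta(X))\big)\,d\sigma=\int_{\zeta(1)}^{\zeta(X)}\sigma^{-1}\cos\big(2(\zeta-\zeta(X))\big)\,d\zeta .
\]
Here $\sigma^{-1}$ is monotone decreasing in $\zeta$. An integration by parts (or the second mean value theorem) then bounds this integral by an absolute constant, and after multiplying by $X^{-1/2}$ it is $O(X^{-1/2})$.

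Collecting the pieces gives $J(X)=\pi^{-2}+O(X^{-1/2})$. Therefore $\rho^2K_t(\rho)=\pi^2J(X)\to1$, and in fact with rate $O(\rho^{-1/3})$.

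The only care needed is to use the uniform form of the Airy remainders for $x\ge1$ and to keep the $X$-dependent factors separate from the $\sigma$-integration.
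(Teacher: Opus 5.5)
Your argument is correct. Its skeleton matches the paper's: cut off an initial layer of width of order $\rho^{-2/3}$ (your $\sigma\le1$), use the Airy asymptotics on the rest so that the bracket reduces via the sine subtraction formula to $\sin(\zeta(\sigma)-\zeta(X))$, write $\sin^2=\frac12(1-\cos 2y)$, and treat the oscillatory piece in the phase variable, where the weight becomes $s^{-1}$.

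The genuine difference is in how the errors are controlled. The paper uses only qualitative remainders $r_A,r_B\to0$. That forces the cut at $L\rho^{-2/3}$, a bulk error $C\varepsilon_L$, and the two-step limit ($\rho\to\infty$ first, then $L\to\infty$) described in Remark~\ref{cross}. It then kills the oscillation with the Riemann--Lebesgue lemma.

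You instead use the next order of the Airy expansion, i.e.\ remainders $O(x^{-3/2})$ uniformly on $x\ge1$. Because $\sigma^{-1/2}\cdot\sigma^{-3/2}=\sigma^{-2}$ is integrable on $[1,\infty)$, one fixed cut suffices. The monotonicity of $\sigma^{-1}$ in $\zeta$ (second mean value theorem) then gives an explicit $O(1)$ bound on the oscillatory integral, rather than mere decay.

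The price is a slightly stronger, though standard, input. The gain is a rate, $\rho^2K_t(\rho)=1+O\bigl((\rho^{2/3}t)^{-1/2}\bigr)$, which is uniform in $t\ge t_0>0$ thanks to your scaling identity $\rho^2K_t(\rho)=\pi^2J(\rho^{2/3}t)$. Inserted into the proof of Corollary~\ref{corollary}, the resulting bound $|K_t(\rho)-\rho^{-2}|=O(\rho^{-7/3})$ would upgrade the remainder in $r_t(h)=1-\alpha_t|h|+o(|h|)$ to $O(|h|^{4/3})$.
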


\begin{proof}
Fix \(t>0\). We shall prove that
\[
  \rho^2\int_0^t |C(t,s,\rho)|^2\,ds \longrightarrow 1 .
\]
We use the standard Airy asymptotics on the negative real axis. Namely, if
\[
  \theta(x):=\frac23 x^{3/2}+\frac{\pi}{4},
  \qquad x>0,
\]
then, as \(x\to+\infty\),
\[
  \Ai(-x)
  =
  \pi^{-1/2}x^{-1/4}
  \bigl(\sin\theta(x)+r_A(x)\bigr),
\]
and
\[
  \Bi(-x)
  =
  \pi^{-1/2}x^{-1/4}
  \bigl(\cos\theta(x)+r_B(x)\bigr),
\]
where
\[
  r_A(x)\to0,\qquad r_B(x)\to0
  \qquad \text{as } x\to+\infty.
\]
Equivalently, if
\[
  \varepsilon_L
  :=
  \sup_{x\ge L}\bigl(|r_A(x)|+|r_B(x)|\bigr),
\]
then
\[
  \varepsilon_L\to0
  \qquad \text{as } L\to+\infty.
\]
Let \(L>1\) be fixed. For \(\rho\) large enough we have
\[
  L\rho^{-2/3}<t.
\]
We split
\[
  \rho^2K_t(\rho)
  =
  \rho^2\int_0^{L\rho^{-2/3}} |C(t,s,\rho)|^2\,ds
  +
  \rho^2\int_{L\rho^{-2/3}}^t |C(t,s,\rho)|^2\,ds
  =: I_{\mathrm{small}}(\rho,L)+I_{\mathrm{bulk}}(\rho,L).
\]
We first show that the small interval gives no contribution. On
\[
  0\le s\le L\rho^{-2/3},
\]
the quantities \(\rho^{2/3}s\) remain in the compact interval \([0,L]\). Hence
\(\Ai(-\rho^{2/3}s)\) and \(\Bi(-\rho^{2/3}s)\) are bounded by a constant depending only on \(L\). On the other hand,
\[
  \Ai(-\rho^{2/3}t)=O((\rho^{2/3}t)^{-1/4})
  =O(\rho^{-1/6}),
\]
and similarly
\[
  \Bi(-\rho^{2/3}t)=O(\rho^{-1/6}).
\]
Therefore
\[
\begin{aligned}
 &\Big|
 \Bi(-\rho^{2/3}t)\Ai(-\rho^{2/3}s)
 -
 \Ai(-\rho^{2/3}t)\Bi(-\rho^{2/3}s)
 \Big|
 \le C_L\rho^{-1/6}.
\end{aligned}
\]
Consequently,
\[
  |C(t,s,\rho)|^2
  \le C_L \rho^{-4/3}\rho^{-1/3}
  =
  C_L\rho^{-5/3}.
\]
Thus
\[
  I_{\mathrm{small}}(\rho,L)
  \le
  \rho^2
  \int_0^{L\rho^{-2/3}} C_L\rho^{-5/3}\,ds
  =
  C_L L \rho^{-1/3}
  \longrightarrow 0
\]
as \(\rho\to+\infty\), for every fixed \(L\).\\
We now study \(I_{\mathrm{bulk}}(\rho,L)\). Put
\[
  x_t:=\rho^{2/3}t,\qquad x_s:=\rho^{2/3}s .
\]
On the interval \(s\in[L\rho^{-2/3},t]\), both \(x_s\) and \(x_t\) are at least \(L\), for \(\rho\) sufficiently large. Therefore the Airy asymptotics above apply uniformly. Define
\[
\begin{aligned}
D(t,s,\rho)
&:=
\Bi(-x_t)\Ai(-x_s)
-
\Ai(-x_t)\Bi(-x_s).
\end{aligned}
\]
Using the asymptotic formulae, we get
\[
\begin{aligned}
D(t,s,\rho)
&=
\pi^{-1}(x_tx_s)^{-1/4}
\Big[
 \cos\theta(x_t)\sin\theta(x_s)
 -
 \sin\theta(x_t)\cos\theta(x_s)
 +
 R_L(t,s,\rho)
\Big],
\end{aligned}
\]
where, uniformly for \(s\in[L\rho^{-2/3},t]\),
\[
  |R_L(t,s,\rho)|\le C\varepsilon_L .
\]
Since
\[
  \cos\theta(x_t)\sin\theta(x_s)
  -
  \sin\theta(x_t)\cos\theta(x_s)
  =
  \sin(\theta(x_s)-\theta(x_t)),
\]
and since
\[
  \theta(x_t)-\theta(x_s)
  =
  \frac23 \rho(t^{3/2}-s^{3/2}),
\]
we obtain
\[
D(t,s,\rho)
=
\pi^{-1}\rho^{-1/3}(ts)^{-1/4}
\Big[
 -\sin\Big(\frac23\rho(t^{3/2}-s^{3/2})\Big)
 +
 R_L(t,s,\rho)
\Big].
\]
Multiplying by the prefactor in \(C\), namely \(C=-\pi\rho^{-2/3}D\), gives
\[
C(t,s,\rho)
=
\rho^{-1}(ts)^{-1/4}
\Big[
 \sin\Big(\frac23\rho(t^{3/2}-s^{3/2})\Big)
 +
 \widetilde R_L(t,s,\rho)
\Big],
\]
with
\[
  |\widetilde R_L(t,s,\rho)|\le C\varepsilon_L
\]
uniformly on the same interval. Therefore
\[
\begin{aligned}
I_{\mathrm{bulk}}(\rho,L)
&=
\int_{L\rho^{-2/3}}^t
(ts)^{-1/2}
\Big|
 \sin\Big(\frac23\rho(t^{3/2}-s^{3/2})\Big)
 +
 \widetilde R_L(t,s,\rho)
\Big|^2\,ds .
\end{aligned}
\]
It follows that
\[
\begin{aligned}
I_{\mathrm{bulk}}(\rho,L)
&=
J(\rho,L)+E(\rho,L),
\end{aligned}
\]
where
\[
  J(\rho,L)
  :=
  \int_{L\rho^{-2/3}}^t
  (ts)^{-1/2}
  \sin^2\Big(\frac23\rho(t^{3/2}-s^{3/2})\Big)\,ds,
\]
and
\[
  |E(\rho,L)|
  \le
  C\varepsilon_L
  \int_{L\rho^{-2/3}}^t (ts)^{-1/2}\,ds .
\]
But
\[
  \int_{L\rho^{-2/3}}^t (ts)^{-1/2}\,ds
  =
  t^{-1/2}
  \int_{L\rho^{-2/3}}^t s^{-1/2}\,ds
  \le 2.
\]
Hence
\[
  |E(\rho,L)|\le C\varepsilon_L .
\]
It remains to compute the limit of \(J(\rho,L)\). Using
\[
  \sin^2 y=\frac12(1-\cos(2y)),
\]
we write
\[
\begin{aligned}
J(\rho,L)
&=
\frac12 t^{-1/2}
\int_{L\rho^{-2/3}}^t s^{-1/2}\,ds        \\
&\quad
-
\frac12 t^{-1/2}
\int_{L\rho^{-2/3}}^t
s^{-1/2}
\cos\Big(\frac43\rho(t^{3/2}-s^{3/2})\Big)\,ds .
\end{aligned}
\]
The non-oscillatory part is
\[
  \frac12 t^{-1/2}
  \int_{L\rho^{-2/3}}^t s^{-1/2}\,ds
  =
  t^{-1/2}
  \left(\sqrt t-\sqrt L\,\rho^{-1/3}\right)
  \longrightarrow 1 .
\]
We claim that the oscillatory part tends to zero. Set
\[
  u=\frac23(t^{3/2}-s^{3/2}).
\]
Then
\[
  du=-s^{1/2}\,ds,
  \qquad
  s=\left(t^{3/2}-\frac32u\right)^{2/3},
\]
and therefore
\[
  s^{-1/2}\,ds
  =
  -s^{-1}\,du
  =
  -\left(t^{3/2}-\frac32u\right)^{-2/3}\,du .
\]
Thus
\[
\begin{aligned}
&\int_{L\rho^{-2/3}}^t
s^{-1/2}
\cos\Big(\frac43\rho(t^{3/2}-s^{3/2})\Big)\,ds      \\
&\qquad =
\int_0^{\frac23(t^{3/2}-L^{3/2}\rho^{-1})}
\left(t^{3/2}-\frac32u\right)^{-2/3}
\cos(2\rho u)\,du .
\end{aligned}
\]
The function
\[
  g(u):=
  \left(t^{3/2}-\frac32u\right)^{-2/3}
\]
belongs to
\[
  L^1\left(0,\frac23t^{3/2}\right),
\]
because its singularity at the endpoint is of order \(2/3<1\). Hence, by the Riemann--Lebesgue lemma,
\[
  \int_0^{\frac23t^{3/2}}
  g(u)\cos(2\rho u)\,du
  \longrightarrow 0 .
\]
Moreover,
\[
\begin{aligned}
&\left|
\int_{\frac23(t^{3/2}-L^{3/2}\rho^{-1})}^{\frac23t^{3/2}}
g(u)\cos(2\rho u)\,du
\right|        \\
&\qquad \le
\int_{\frac23(t^{3/2}-L^{3/2}\rho^{-1})}^{\frac23t^{3/2}}
g(u)\,du
\longrightarrow 0 .
\end{aligned}
\]
Therefore the oscillatory part tends to zero, and so
\[
  J(\rho,L)\longrightarrow 1
  \qquad \text{as } \rho\to+\infty .
\]
We have shown that, for every fixed \(L>1\),
\[
  \limsup_{\rho\to+\infty}
  \bigl|\rho^2K_t(\rho)-1\bigr|
  \le C\varepsilon_L .
\]
Finally, since \(\varepsilon_L\to0\) as \(L\to+\infty\), we conclude that
\[
  \rho^2K_t(\rho)\longrightarrow 1 .
\]
Equivalently,
\[
  K_t(\rho)=\rho^{-2}+o(\rho^{-2})
  \qquad \text{as } \rho\to+\infty .
\]
The proof is complete.
\end{proof}

\begin{remark}[Control of the Airy remainder terms]\label{cross}
The preceding proof avoids the decomposition into a principal part and a separate remainder term on a fixed interval
\([s_\rho,t]\).  The reason for introducing the auxiliary parameter \(L\) is precisely to make the Airy remainders uniformly small on the whole bulk region
\[
   L\rho^{-2/3}\le s\le t.
\]
Indeed, on this region we obtained
\[
C(t,s,\rho)
=
\rho^{-1}(ts)^{-1/4}
\left[
S(t,s,\rho)+\widetilde R_L(t,s,\rho)
\right],
\]
where
\[
   S(t,s,\rho)
   :=
   \sin\!\left(\frac23\rho(t^{3/2}-s^{3/2})\right),
   \qquad
   |\widetilde R_L(t,s,\rho)|\le C\varepsilon_L,
\]
and
\[
   \varepsilon_L:=\sup_{x\ge L}\big(|r_A(x)|+|r_B(x)|\big)\to0
   \qquad\text{as }L\to+\infty.
\]
Therefore the whole contribution of the cross term and of the square of the remainder is estimated at once by
\[
\begin{aligned}
&\left|
\int_{L\rho^{-2/3}}^t
(ts)^{-1/2}
\left(
 |S(t,s,\rho)+\widetilde R_L(t,s,\rho)|^2
 -|S(t,s,\rho)|^2
\right)ds
\right|                                      \\
&\qquad\le
C\varepsilon_L
\int_{L\rho^{-2/3}}^t (ts)^{-1/2}\,ds
\le C\varepsilon_L .
\end{aligned}
\]
Thus the error terms are not treated by proving a separate oscillatory estimate for each cross term.  They are instead bounded uniformly, for large \(\rho\), by a quantity that can be made arbitrarily small by choosing \(L\) large.  This is the point of the two-step argument: first let \(\rho\to+\infty\) with \(L\) fixed, and then let \(L\to+\infty\).
\end{remark}

We are now ready to prove \eqref{r} and hence to complete the proof of Corollary \ref{corollary}.
By virtue of the classical identity
	\begin{align*}
		\int_0^{+\infty} \frac{1-\cos(\rho h)}{\rho^2}d\rho=\frac{\pi}{2}|h|,\quad h\in\mathbb{R}
	\end{align*}
we can write
\begin{align*}
		\int_0^{+\infty}(1-\cos(\rho h))K_t(\rho)d\rho&=\int_0^{+\infty}(1-\cos(\rho h))(\rho^{-2}+\tilde{K}_t(\rho))d\rho\\
		&=\frac{\pi}{2}|h|+\int_0^{+\infty}(1-\cos(\rho h))\tilde{K}_t(\rho)d\rho
\end{align*}	
where $\tilde{K}_t(\rho):=K_t(\rho)-\rho^{-2}$. By Lemma \ref{as},
$\tilde{K}_t(\rho)=o(\rho^{-2})$ as $\rho\to+\infty$. In particular, for any $\varepsilon>0$ there exists $\rho_0$ such that $|\tilde{K}_t(\rho)|\leq \varepsilon\rho^{-2}$ for all $\rho\geq \rho_0$. With such $\rho_0$ we split the last integral as
\begin{align*}
	\int_0^{+\infty}(1-\cos(\rho h))\tilde{K}_t(\rho)d\rho&=\int_0^{\rho_0}(1-\cos(\rho h))\tilde{K}_t(\rho)d\rho+\int_{\rho_0}^{+\infty}(1-\cos(\rho h))\tilde{K}_t(\rho)d\rho\\
	&\leq \frac{|h|^2}{2}\int_0^{\rho_0}\rho^2|\tilde{K}_t(\rho)|d\rho+\varepsilon\frac{\pi}{2}|h|.
\end{align*}
This shows that
\begin{align*}
	r_t(h)=1-\alpha_t|h|+o(|h|)\mbox{ as }h\to 0
\end{align*}
with
\begin{align*}
	\alpha_t:=\frac{\pi}{2\int_0^{+\infty}K_t(\rho)d\rho}. 
\end{align*}

\section{Proof of Corollary \ref{corollary2}}\label{proof corollary2}

We prove the estimate on an arbitrary compact time interval \([0,T]\).  By
Gaussianity, this second-moment estimate implies the asserted Hölder
regularity through Kolmogorov's continuity theorem.

We shall use the following elementary consequences of the Airy asymptotics on
the negative real axis.  There exists a constant \(C>0\) such that, for every
\(x\ge0\),
\begin{align}
  |\Ai(-x)|+|\Bi(-x)| &\le C(1+x)^{-1/4}, \label{eq:airy-global-0}\\
  |\Ai'(-x)|+|\Bi'(-x)| &\le C(1+x)^{1/4}. \label{eq:airy-global-1}
\end{align}
The estimates are standard: for bounded \(x\) they follow from smoothness of
\(\Ai\) and \(\Bi\), while for large \(x\) they are exactly the usual
oscillatory Airy estimates.

Let
\[
  0\le s\le t\le T,
  \qquad
  \delta:=t-s.
\]
The cases \(t=s\) and \(t=0\) are trivial.  Moreover, it is enough to prove the
estimate for \(0<\delta\le1\); the case \(\delta>1\) is absorbed by increasing
\(C_T\), since \(\sup_{0\le u\le T}\mathbb E|U(u,x)|^2<+\infty\), by the variance
formula \eqref{norma} and the same kernel bounds used below.  Thus we assume
\(0<\delta\le1\).\\
From the covariance formula, or equivalently from the isometry used in the
proof of Theorem \ref{main theorem}, we have
\begin{align}
  \mathbb E|U(t,x)-U(s,x)|^2
  &=\frac{1}{2\pi^2}
    \int_0^\infty\int_0^s
    |C(t,r,\rho)-C(s,r,\rho)|^2\,dr\,d\rho          \notag\\
  &\quad
    +\frac{1}{2\pi^2}
    \int_0^\infty\int_s^t
    |C(t,r,\rho)|^2\,dr\,d\rho                         \notag\\
  &=: \frac{1}{2\pi^2}(I_1+I_2).
  \label{eq:variance-split}
\end{align}
We prove that \(I_1+I_2\le C_T\delta\). Set
\[
  \rho_t:=t^{-3/2}.
\]
For \(0<\rho\le \rho_t\), all Airy arguments \(\rho^{2/3}\tau\), with
\(0\le \tau\le t\), remain in a fixed bounded interval.  Differentiating the
formula for \(C\) in the first time variable gives
\begin{align*}
\partial_\tau C(\tau,r,\rho)
 =\pi\Big[\Bi'(-\rho^{2/3}\tau)\Ai(-\rho^{2/3}r)
       -\Ai'(-\rho^{2/3}\tau)\Bi(-\rho^{2/3}r)\Big],
\end{align*}
which is uniformly bounded in this small-argument region.  Since
\(C(r,r,\rho)=0\), we obtain
\begin{align}
  |C(t,r,\rho)|\le C_T|t-r|,
  \qquad
  |C(t,r,\rho)-C(s,r,\rho)|\le C_T\delta.
  \label{eq:small-rho-bounds}
\end{align}
Consequently,
\begin{align}
\int_0^{\rho_t}\int_0^s
|C(t,r,\rho)-C(s,r,\rho)|^2\,dr\,d\rho
&\le C_T\rho_t s\delta^2
 \le C_T t^{-3/2}t\delta^2                    \notag\\
&\le C_T\delta,                                  \label{eq:small-rho-I1}
\end{align}
and
\begin{align}
\int_0^{\rho_t}\int_s^t |C(t,r,\rho)|^2\,dr\,d\rho
&\le C_T\rho_t\int_s^t (t-r)^2\,dr              \notag\\
&\le C_Tt^{-3/2}\delta^3
 \le C_T\delta.                                  \label{eq:small-rho-I2}
\end{align}
Here we used \(s\le t\), \(\delta\le t\), and \(t\le T\).\\
It remains to estimate the contribution of \(\rho\ge \rho_t\).  We first treat
\(I_1\).  Split the \(r\)-integration into
\[
  0\le r\le \rho^{-2/3},
  \qquad
  \rho^{-2/3}\le r\le s.
\]
The first interval corresponds to the region where the second Airy argument
\(\rho^{2/3}r\) is small.  In this region \(\Ai(-\rho^{2/3}r)\) and
\(\Bi(-\rho^{2/3}r)\) are uniformly bounded.  From the formula for \(C\) we
always have
\[
 |C(t,r,\rho)-C(s,r,\rho)|\le C\rho^{-2/3}.
\]
If \(\rho\ge1\), the derivative estimate \eqref{eq:airy-global-1} also gives
\[
 |C(t,r,\rho)-C(s,r,\rho)|\le C_T\delta\rho^{1/6}.
\]
The part \(\rho_t\le\rho\le1\) contributes at most
\[
  C_T\delta^2\int_{\rho_t}^{1}\rho^{-2/3}\,d\rho\le C_T\delta.
\]
For the remaining part \(\rho\ge\max\{\rho_t,1\}\), we get
\begin{align}
&\int_{\max\{\rho_t,1\}}^\infty\int_0^{\min\{s,\rho^{-2/3}\}}
|C(t,r,\rho)-C(s,r,\rho)|^2\,dr\,d\rho             \notag\\
&\qquad\le
C_T\int_0^\infty
\rho^{-2/3}\min\{\rho^{-4/3},\delta^2\rho^{1/3}\}\,d\rho
\le C_T\delta.                                      \label{eq:I1-small-r}
\end{align}
Indeed, splitting the last integral at \(\rho=\delta^{-6/5}\) gives a bound
\(C\delta^{6/5}\le C\delta\), because \(0<\delta\le1\).\\
We now consider the second region, \(\rho^{-2/3}\le r\le s\).  Then
\(\rho^{2/3}r\ge1\), and hence also \(\rho^{2/3}s\ge1\).  Using
\eqref{eq:airy-global-0} and \eqref{eq:airy-global-1}, we obtain
\begin{align}
 |C(t,r,\rho)-C(s,r,\rho)|
 \le C_T r^{-1/4}
 \min\{\rho^{-1}s^{-1/4},\delta t^{1/4}\}.
 \label{eq:large-r-difference}
\end{align}
The first term in the minimum follows by estimating the difference by the sum
of the two values at \(t\) and \(s\); the second follows by the mean-value
formula in the first time variable.  Squaring \eqref{eq:large-r-difference}
and integrating first with respect to \(\rho\), we use the elementary identity
\[
  \int_0^\infty \min\{A\rho^{-2},B\}\,d\rho
  \le 2\sqrt{AB},
  \qquad A,B>0.
\]
With \(A=s^{-1/2}\) and \(B=\delta^2t^{1/2}\), this gives
\begin{align}
&\int_{\rho_t}^\infty\int_{\min\{s,\rho^{-2/3}\}}^s
|C(t,r,\rho)-C(s,r,\rho)|^2\,dr\,d\rho              \notag\\
&\qquad\le
C_T\int_0^s r^{-1/2}\,dr
\int_0^\infty \min\{\rho^{-2}s^{-1/2},\delta^2t^{1/2}\}\,d\rho
                                                                  \notag\\
&\qquad\le
C_T s^{1/2}\delta t^{1/4}s^{-1/4}
\le C_T\delta.                                      \label{eq:I1-large-r}
\end{align}
Combining \eqref{eq:small-rho-I1}, \eqref{eq:I1-small-r}, and
\eqref{eq:I1-large-r}, we obtain
\begin{align}
  I_1\le C_T\delta.                                  \label{eq:I1-final}
\end{align}
It remains to estimate \(I_2\) in the large-frequency region.  For
\(\rho\ge\rho_t\) and \(0<r\le t\), estimate \eqref{eq:airy-global-0} gives
\begin{align*}
 |C(t,r,\rho)|\le C_T\rho^{-1}(tr)^{-1/4}.
\end{align*}
Therefore
\begin{align*}
\int_{\rho_t}^\infty\int_s^t |C(t,r,\rho)|^2\,dr\,d\rho
&\le C_T\int_{\rho_t}^\infty \rho^{-2}\,d\rho
       \int_s^t (tr)^{-1/2}\,dr                    \\
&\le C_T\rho_t^{-1}t^{-1/2}(\sqrt t-\sqrt s)        \\
&\le C_Tt^{3/2}t^{-1/2}\frac{t-s}{\sqrt t+\sqrt s}  \\
&\le C_Tt^{1/2}\delta
\le C_T\delta.                                      
\end{align*}
Together with \eqref{eq:small-rho-I2}, this proves
\begin{align}
  I_2\le C_T\delta.                                  \label{eq:I2-final}
\end{align}
From \eqref{eq:variance-split}, \eqref{eq:I1-final}, and \eqref{eq:I2-final},
we conclude that
\begin{align*}
  \mathbb E|U(t,x)-U(s,x)|^2\le C_T|t-s|,
  \qquad 0\le s,t\le T.
\end{align*}
Since the increment is Gaussian, for every \(p\ge2\),
\begin{align*}
 \mathbb E|U(t,x)-U(s,x)|^p
 \le C_p |t-s|^{p/2}.
\end{align*}
Choosing \(p>2\) and applying Kolmogorov's continuity theorem gives a
modification whose time paths are Hölder continuous of every order
\(\gamma<1/2\).  This proves Corollary \ref{corollary2}.

\noindent\textbf{Author Contributions}: All authors contributed equally.

\noindent\textbf{Data availability}: No datasets were generated or analysed during the current study.

\noindent\textbf{Conflict of interest}: The authors declare no competing interests.\\

\bibliographystyle{plain}
\bibliography{Additive_Tricomi}

\end{document}